\newtheorem{theorem}{Theorem}[section]
\newtheorem{lemma}[theorem]{Lemma}
\renewcommand\ge\geqslant
\renewcommand\geq\geqslant
\renewcommand\le\leqslant
\renewcommand\leq\leqslant
\newcommand{\PP}{\mathbb{P}}
\newcommand{\RR}{\mathbb{R}}
\title{Discriminants of multilinear systems}
\author{Ioannis Z.~Emiris\footnote{
        Department of Informatics \& Telecommunications,
        National Kapodistrian University of Athens,
        Panepistimiopolis 15784, Greece. E-mail: {\sf emiris@di.uoa.gr}.},
\hspace*{1cm}
        Raimundas Vidunas\footnote{Graduate School of Information Science and Technology, Osaka University, Osaka, Japan.
        E-mail: {\sf rvidunas@gmail.com}.}}
\date{\today}
\begin{document}
\maketitle

\begin{abstract}
We study well-constrained
bilinear algebraic systems in order to formulate their discriminant.
We derive a new determinantal formula for the discriminant of a
multilinear system that appears in
the study of Nash equilibria of multiplayer games with mixed strategies.
\end{abstract}

\section{Introduction} 

We study well-constrained bilinear algebraic systems.
We aim at compact formulae for the discriminant of such systems so as
to improve the complexity of computing them.
One method would be for discriminants to be computed via implicitization
\cite{EmKaKoLBspm}.

In general, matrix formulae for the discriminant would be preferable but
they are quite hard to obtain and very few currently exist.
For instance, the discriminant of a single univarite polynomial is given,
up to a multiplicative monomial factor, by the determinant of the
Sylvester matrix of the polynomial and its derivative.
For a more general study see \cite{SturmfHur}.  

The lack of compact discriminant formulae is in contrast
to resultant matrices, which have been 
extensively studied and for which compact formulae exist for a large
number of system families. In particular,
the resultant matrices of overconstrained multihomogeneous systems
have been studied by Dickenstein, Mantzaflaris, and Emiris
\cite{DicEmi03,EmiManJsc} and, earlier,
by Sturmfels, Weyman, and Zelevinsky \cite{StZe,WeZe}.

\section{Purely bilinear systems}

Consider a bilinear polynomial system of $n+m$ equations on $\RR^n\times\RR^m$:
\begin{align} \label{eq:bilin}
F_k: \sum_{i=0}^n \sum_{j=0}^m a^{(k)}_{i,j} x_i y_j =0, \qquad 1\le k \le n+m.
\end{align}
The set of monomials appearing in each polynomial is generically 
$A=\{x_0,x_1,\ldots,x_n\}\times \{y_0,y_1,\ldots,y_m\}$.
Assuming the system is unmixed,
the generic number of solutions is the volume
(normalized to~1 for unit
simplex $\Delta_{n+m}$)
of the simplex product $\Delta_n\times\Delta_m$:
\begin{equation}
{n+m\choose n}=(n+m)! \left( \frac1{n!} \times \frac1{m!} \right).
\end{equation}
The {\em discriminant} $\Delta_A(F_1,\ldots,F_{n+m})$ of the system is
 the irreducible polynomial (with coprime coefficients, defined up to a sign)
 in the coefficients $a^{(k)}_{i,j}$ which vanishes whenever the system (\ref{eq:bilin})
 has a multiple solution. 

The discriminant can be computed (up to superflous factors) by eliminating all affine
variables except one,
and computing the discriminant of the univariate elimination polynomial.
For $n=m=1$, with $a_{ij}=a^{(1)}_{i,j}$, $b_{ij}=a^{(2)}_{i,j}$, we have
\begin{align} \label{eq:p11}
\Delta_A(F_1,F_2)= & \left(
\left| \begin{matrix}
a_{00} & a_{01} \\ b_{10} & b_{11}
\end{matrix} \right| 
- \left| \begin{matrix}
a_{10} & a_{11} \\ b_{00} & b_{01}
\end{matrix} \right|
\right) \left(
\left| \begin{matrix}
a_{00} & a_{10} \\ b_{01} & b_{11}
\end{matrix} \right| 
- \left| \begin{matrix}
a_{01} & a_{11} \\ b_{00} & b_{10}
\end{matrix} \right|
\right) \nonumber \\ 
& -4 \left| \begin{matrix}
a_{00} & a_{01} \\ a_{10} & a_{11}
\end{matrix} \right| 
\left| \begin{matrix}
b_{00} & b_{01} \\ b_{10} & b_{11}
\end{matrix} \right|.
\end{align}

\section{Degree bound}

This section bounds the degree of the discriminant.

For sparse polynomial systems, a degree bound was obtained by
Cattani, Cueto, Dickenstein, di Rocco and Sturmfels \cite{CCDRS}
and a more special case settled in \cite{DiEmKa}.

The discriminant equals the resultant of the equations (\ref{eq:bilin}) and $J=0$,
where
\begin{equation}
J=\det \left( 
\left. \frac{\partial F_i}{\partial x_j} \right|_{j=1\ldots n} \quad
\left. \frac{\partial F_i}{\partial y_j} \right|_{j=1\ldots m} 
\right)_{\!i=1\ldots n+m}
\end{equation}
is the Jacobian (determinant). The first $n$ columns of the Jacobian matrix
do not depend on the variables $x_j$ and are linear in the variables $y_j$,
while the last $m$ columns of the Jacobian matrix
do not depend on the variables $y_j$ and are linear in the variables $x_j$.
Therefore the Jacobian is (homogeneous) of degree $m$ in the $x_j$'s,
and of degree $n$ in the $y_j$'s.  The support is a product of scaled simplexes:
$m\Delta_n \times n\Delta_m$. The Jacobian is multilinear in the coefficients
of $a^{(k)}_{i,j}$, linear for each group with fixed $k$.

To bound the degree of the discriminant in the variables $a^{(1)}_{i,j}$, 
we compute
\begin{align*}
MV(J,F_2,\ldots,F_{n+m})+MV(F_1,F_2,\ldots,F_{n+m}) \deg_k J.
\end{align*}
The first term is (up to the factor $1/n!m!$) the permanent of a $(n+m)\times(n+m)$ matrix 
with $m$ $n$'s and $n$ $m$'s in one row, and all other entries equal to 1,
hence $2nm(n+m-1)!/n!m!$. The degree bound is 
\begin{align}
\left( \frac{2nm}{n+m}+1 \right) {n+m\choose n}.
\end{align}
The total degree is $n+m$ times larger.

The actual degrees appear to be smaller: 2 instead of 4 for $n=m=1$,
and 4 instead of 7 for $\{n,m\}=\{1,2\}$.

\section{Ideals containing the discriminant}

\begin{theorem}
The discriminant of the bilinear system $(\ref{eq:bilin})$
is in the ideal generated by the maximal minors of the $(m+n)(n+1)\times (m+1)$ matrix 
\begin{equation} \label{eq:ideal1}
(\partial F_i/\partial x_j)_{1\le i\le m+n,0\le j\le n},
\end{equation}
where each row represents a linear polynomial in the $y_k$'s,
and the columns correspond to the variables $(y_0:y_1:\cdots:y_m)$.
\end{theorem}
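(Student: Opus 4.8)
The plan is to read the matrix in $(\ref{eq:ideal1})$ as a \emph{coefficient} matrix and to exploit the fact, recorded above, that $\Delta_A$ equals the resultant $\mathrm{Res}(F_1,\dots,F_{n+m},J)$. Writing $\partial F_i/\partial x_j=\sum_{l=0}^m a^{(i)}_{j,l}\,y_l$, the row indexed by the pair $(i,j)$ with $1\le i\le m+n$, $0\le j\le n$ and the column indexed by $l$, $0\le l\le m$, carry the entry $a^{(i)}_{j,l}$; this is exactly the $(m+n)(n+1)\times(m+1)$ matrix $M$ of the statement. Since the $a^{(k)}_{i,j}$ are independent indeterminates, each appearing once, $M$ is a \emph{generic} matrix, and the ideal $I$ generated by its maximal $(m+1)\times(m+1)$ minors is a classical determinantal ideal, which is prime and in particular radical. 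By the Nullstellensatz it therefore suffices to prove the set inclusion $V(I)\subseteq V(\Delta_A)$, i.e.\ that $\Delta_A$ vanishes at every coefficient configuration in $V(I)$.

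Next I would unwind what $V(I)$ means. All maximal minors of $M$ vanish precisely when $\operatorname{rank} M\le m$, equivalently when $M$ has a nontrivial kernel: there is a nonzero vector $y^\ast=(y_0^\ast:\cdots:y_m^\ast)\in\PP^m$ with $M\,y^\ast=0$. By construction of $M$ this says exactly that
\[
\frac{\partial F_i}{\partial x_j}(y^\ast)=0\qquad\text{for all } 1\le i\le m+n,\ 0\le j\le n .
\]

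The heart of the argument is then to check that such a $y^\ast$ forces a degenerate solution of the square system. Because each $F_k$ is linear in the $x$-variables, the Euler-type identity $F_k=\sum_{j=0}^n x_j\,\partial F_k/\partial x_j$ gives $F_k(x,y^\ast)=0$ for \emph{every} $x\in\PP^n$; thus the whole fibre $\PP^n\times\{y^\ast\}$ lies in the common zero locus of $F_1,\dots,F_{m+n}$. Moreover the first $n$ columns of the Jacobian matrix are the forms $\partial F_i/\partial x_j$ with $j=1,\dots,n$, all of which vanish at $y^\ast$, so $J(x,y^\ast)=0$ as well. Hence any point $(x:y^\ast)$ is a common zero of $F_1,\dots,F_{m+n}$ and $J$, and by the resultant description of the discriminant we conclude $\Delta_A=0$ on $V(I)$. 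Combined with the radicality of $I$, this yields $\Delta_A\in I$.

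The main obstacle I anticipate is the final geometric step: one must be sure that the exhibited degenerate solution is actually \emph{counted} by the resultant, i.e.\ that $(x:y^\ast)$ lies in the toric compactification $\PP^n\times\PP^m$ over which $\mathrm{Res}(F_1,\dots,F_{m+n},J)$ is defined and that no spurious factor of the resultant, rather than $\Delta_A$ itself, absorbs the vanishing. Since $\PP^n\times\PP^m$ is precisely the toric variety of the simplex products $m\Delta_n\times n\Delta_m$ in play and $(x:y^\ast)$ is a genuine point of it, this reduces to the standard vanishing criterion for the sparse resultant; the only external input needed is the primeness of determinantal ideals of generic matrices, which is classical.
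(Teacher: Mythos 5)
Your proposal is correct and follows essentially the same route as the paper's own proof: a nontrivial kernel vector $y^\ast$ of the coefficient matrix kills all the derivatives $\partial F_i/\partial x_j$, Euler's relation then makes every $F_i$ vanish on the whole fibre $\PP^n\times\{y^\ast\}$, and the vanishing of the first $n$ Jacobian columns exhibits this fibre as a family of singular solutions, forcing $\Delta_A=0$. The one thing you add beyond the paper is the explicit justification that set-theoretic vanishing on $V(I)$ implies membership in $I$ --- via the primeness (hence radicality) of the ideal of maximal minors of a generic matrix --- a step the paper's proof takes for granted; this is a worthwhile clarification rather than a different method.
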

\begin{proof}
We have to prove that if the maximal minors vanish, the discriminant is zero. 
If the maximal minors vanish, we have a kernel $(u_0,u_1,\ldots,u_m)$
of the matrix in (\ref{eq:ideal1}). Hence the $(m+n)(n+1)$ derivatives $\partial F_i/\partial x_j$
vanish with all $y_i=u_i$. By Euler's relation
\begin{equation} \label{eq:euler}
F_i=\sum_{k=0}^n x_k \frac{\partial F_i}{\partial x_k}
\end{equation}
we conclude that all $F_i$ vanish at all $y_i=u_i$ and with any $x_i$.
The derivatives $\partial F_i/\partial x_j$ with $j\neq 0$ form the 
$(m+n)\times n$ zero submatrix of the Jacobian. Thus we have a whole subspace
of singular solutions of the system (\ref{eq:bilin}), hence the discriminant vanishes.
\end{proof}

Similarly, the discriminant must be in the ideal 
generated by the minors of the $(m+n)(m+1)\times (n+1)$ matrix 
\[
(\partial F_i/\partial y_j)_{1\le i\le m+n,0\le j\le m},
\] 
where the columns correspond to the variables $(x_0:x_1:\ldots:x_m)$.
In particular, the discriminant (\ref{eq:p11}) is in the minor ideals $I_1,I_2$ of
\begin{equation} \label{eq:r111}
\left( \begin{array}{cccccc}
a_{00} & a_{01} \\
a_{10} & a_{11} \\
b_{00} & b_{01} \\
b_{10} & b_{11} \\
\end{array} \right), \qquad
\left( \begin{array}{cccccc}
a_{00} & a_{10} \\
a_{01} & a_{11} \\
b_{00} & b_{10} \\
b_{01} & b_{11} \\
\end{array} \right).
\end{equation}
The discriminant is then in the radical of the product ideal of (intersecting) $I_1$ and $I_2$.
In this case, the discriminant is in the product ideal itself.

Other ideals are: {\em higher discriminant} ideals, characterizing the parameters
of the polynomial system with a multiple root of higher multiplicity,
or more than one multiple root.
Also, the ideal defining the singularity locus of the discriminant hypersurface.

\section{Sparse systems}

This section focuses on sparse multilinear systems, in particular when
each polynomial (or subset of polynomials)
does not depend on a subset of the variables.
These appear in the study of Nash equilibria in \cite{EmiVid14}.

Consider the bilinear system on $\PP^1\times\PP^1\times\PP^1$: 
\begin{eqnarray} \label{eq:p111}
H_1: & a_0x_1y_1+a_1x_1y_0+a_2x_0y_1+a_4x_0y_0 &=0, \nonumber \\
H_2: & b_0x_1z_1+b_1\,x_1z_0+b_3\,x_0z_1+b_4x_0z_0 &=0,\\
H_3: & c_0\,y_1z_1+c_2\,y_1z_0+c_3\,y_0z_1+c_4y_0z_0 &=0. \nonumber
\end{eqnarray}
The generic number of solutions equals 2.
The discriminant equals
\begin{align}
\Delta(H_1,H_2,H_3)=& \left( a_0 \left| \begin{matrix}
b_3 & b_4 \\ c_3 & c_4 \end{matrix} \right|
-a_1  \left| \begin{matrix}
b_3 & b_4 \\ c_0 & c_2 \end{matrix} \right|
-a_2  \left| \begin{matrix}
b_0 & b_1 \\ c_3 & c_4 \end{matrix} \right|
+a_4  \left| \begin{matrix}
b_0 & b_1 \\ c_0 & c_2 \end{matrix} \right|
\right)^{\!2} \nonumber \\
& -4\left| \begin{matrix}
a_0 & a_1 \\ a_2 & a_4 \end{matrix} \right|
\left| \begin{matrix}
b_0 & b_1 \\ b_3 & b_4 \end{matrix} \right|
\left| \begin{matrix}
c_0 & c_2 \\ c_3 & c_4 \end{matrix} \right|.
\end{align}
In the following theorem, the $6\times 6$ matrix is made up of column pairs
similar to the $4\times 2$ matrices in (\ref{eq:r111}):  
the first two columns encode all partial derivatives of $H_1,H_2,H_3$ 
that are linear in $x_1,x_0$, etc.
The columns of the $6\times 6$ matrix thereby
correspond to the multihomogeneous variables
$(x_1:x_0)$, $(y_1:y_0)$, $(z_1:z_0)$. 

\begin{theorem}
The discriminant $\Delta(H_1,H_2,H_3)$ equals 
\begin{equation} \label{eq:r111a}
\det  \left( \begin{array}{cccccc}
0 & 0 & a_0 & a_1 & b_0 & b_1 \\
0 & 0 & a_2 & a_4 & b_3 & b_4 \\
a_0 & a_2 & 0 & 0 & c_0 & c_2 \\
a_1 & a_4 & 0 & 0 & c_3 & c_4 \\
b_0 & b_3 & c_0 & c_3 & 0 & 0 \\
b_1 & b_4 & c_2 & c_4 & 0 & 0
\end{array} \right).
\end{equation}
\end{theorem}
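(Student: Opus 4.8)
The claim is an identity between two polynomials in the twelve coefficients $a_i,b_i,c_i$, so my plan is to verify it as such. Write the $6\times 6$ matrix in $(\ref{eq:r111a})$ in $2\times 2$ block form as
\[
M=\begin{pmatrix} 0 & A & B \\ A^{\mathsf T} & 0 & C \\ B^{\mathsf T} & C^{\mathsf T} & 0 \end{pmatrix},
\qquad
A=\begin{pmatrix} a_0 & a_1\\ a_2 & a_4\end{pmatrix},\quad
B=\begin{pmatrix} b_0 & b_1\\ b_3 & b_4\end{pmatrix},\quad
C=\begin{pmatrix} c_0 & c_2\\ c_3 & c_4\end{pmatrix}.
\]
Then $M$ is symmetric; in fact $M$ is exactly the Hessian (Gram matrix) of the quadratic form $H_1+H_2+H_3$ in the six affine coordinates $(x_1,x_0,y_1,y_0,z_1,z_0)$, which both confirms the symmetry and explains the placement of the blocks. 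The right-hand side of the theorem is visibly of multidegree $(2,2,2)$ in the groups $(a_\bullet),(b_\bullet),(c_\bullet)$, so I would first check that $\det M$ has the same multidegree.

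To pin down the multidegree of $\det M$, I would classify the permutations contributing to the Leibniz expansion by how the three block-rows distribute over the three block-columns. Since the diagonal blocks vanish, a short counting argument — letting $p$ denote the number of rows of the first block-row sent into the second block-column — shows that the only admissible patterns have $p\in\{0,1,2\}$, and that in each case every surviving term uses exactly two $a$'s, two $b$'s and two $c$'s. Hence $\det M$ is multihomogeneous of multidegree $(2,2,2)$, matching the right-hand side, and the problem reduces to a finite comparison of coefficients.

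Next I would expand $\det M$ along these block patterns. The patterns $p=0$ and $p=2$ send each block-row entirely into a single block-column, i.e.\ they are the two cyclic block-permutations; each contributes a single product of block determinants, namely $\pm\det A\,\det B\,\det C$. The pattern $p=1$, in which each block-row splits one row into each of two block-columns, is the combinatorially rich case and, after expansion, produces the bracketed perfect square together with a further multiple of $\det A\,\det B\,\det C$. Collecting the three contributions, the target is to show they reassemble into
\[
\bigl(a_0(b_3c_4-b_4c_3)-a_1(b_3c_2-b_4c_0)-a_2(b_0c_4-b_1c_3)+a_4(b_0c_2-b_1c_0)\bigr)^2-4\,\det A\,\det B\,\det C,
\]
which is precisely the stated formula for $\Delta(H_1,H_2,H_3)$.

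The main obstacle is the sign and coefficient bookkeeping in the $p=1$ pattern: one must check that its cross terms collapse exactly into the perfect square above, and that the residual $\det A\,\det B\,\det C$ terms, combined with the two cyclic contributions, yield the precise coefficient $-4$. I expect the two cyclic terms to supply $-2\,\det A\,\det B\,\det C$ and the $p=1$ pattern the complementary $-2\,\det A\,\det B\,\det C$, but verifying the signs is exactly the delicate point. To avoid tracking every sign by hand I would fix the (at most overall) constant by evaluating both sides at a convenient specialization that isolates a single monomial of each type, which determines the scalar and completes the identification. A more conceptual alternative would be to argue that $\det M$ lies in the radical of the relevant minor ideals and vanishes on the discriminant locus, so that the irreducibility of $\Delta$ together with the matching multidegree forces $\det M=\lambda\,\Delta$; the obstacle there is precisely proving that $\det M$ vanishes whenever the system acquires a multiple solution, which is less transparent than the direct block expansion.
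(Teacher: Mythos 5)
Your proposal takes a genuinely different route from the paper. The paper's proof is conceptual: it sets up a correspondence between a kernel vector $(u_1:\cdots:u_6)$ of the matrix in (\ref{eq:r111a}) and a singular root $(x_1:x_0),(y_1:y_0),(z_1:z_0)$ of (\ref{eq:p111}) paired with a kernel vector $(\lambda_1:\lambda_2:\lambda_3)$ of the transposed Jacobian (\ref{eq:j111}), via $u=(x_1/\lambda_3:x_0/\lambda_3:y_1/\lambda_2:\cdots)$ and the Euler identities (\ref{eq:euler}); this identifies the vanishing locus of the determinant with the discriminant hypersurface, whence the polynomial identity follows from irreducibility and degree matching. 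That is precisely the ``conceptual alternative'' you sketch and set aside in your last sentence, and your observation that $M$ is the Hessian of $H_1+H_2+H_3$ is the same structural point the paper records in the Lemma that follows the theorem. Your primary route --- block Leibniz expansion, the multidegree count via the parameter $p$ (which is correct: the column constraints force the $2$--$2$--$2$ distribution), and coefficient comparison --- is a legitimate, self-contained verification that the paper does not carry out; what it buys is an unconditional identity of polynomials with no appeal to irreducibility of $\Delta$, at the cost of the combinatorial bookkeeping in the $p=1$ pattern.

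Two concrete corrections to that bookkeeping, which your specialization step must be allowed to catch. First, the two cyclic block patterns each contribute $+\det A\,\det B\,\det C$, not $-\det A\,\det B\,\det C$: for $2\times2$ blocks the sign of a block permutation $\sigma$ is $(\operatorname{sgn}\sigma)^2=+1$, so the cyclic terms supply $+2\det A\det B\det C$, and the $p=1$ pattern must supply $-S^2+2\det A\det B\det C$ rather than your predicted $-2\det A\det B\det C$. Second, and more importantly, the target identity you state is off by a global sign: specializing $A=B=C=I_2$ gives $\det M=(\det J)^2=4$ for $J$ the $3\times3$ matrix with zero diagonal and off-diagonal ones, while the displayed discriminant formula evaluates to $S^2-4\det A\det B\det C=-4$; likewise $a_0=1$, $a_1=a_2=a_4=0$ gives $\det M=-(b_3c_4-b_4c_3)^2$ against $+(b_3c_4-b_4c_3)^2$. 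So in fact $\det M=-S^2+4\det A\det B\det C$. Since the paper defines the discriminant only up to sign, the theorem survives, but a literal coefficient-by-coefficient match against the displayed formula would fail, and your final specialization must be set up to determine an overall sign as well as the scalar multiplying $\det A\det B\det C$.
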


\begin{proof}
For a conceptual proof, we relate a multiple root $(x_1:x_0),(y_1:y_0),(z_1:z_0)$ 
of the system (\ref{eq:p111}) and the kernel  $(\lambda_1:\lambda_2:\lambda_2)$ 
of the transposed Jacobian
\begin{equation} \label{eq:j111}
\left( \begin{array}{ccc}
\partial F_1/\partial x_1 & \partial F_2/\partial x_1 & 0 \\
\partial F_1/\partial y_1 & 0 & \partial F_3/\partial y_1 \\
0 & \partial F_2/\partial z_1 & \partial F_3/\partial z_1
\end{array} \right)
\end{equation}
to a kernel vector
$(u_1:u_2:u_3:u_4:u_5:u_6)$ of the matrix in (\ref{eq:r111a}), and vice versa.
The relation is as follows:
\begin{align}
(u_1:u_2:u_3:u_4:u_5:u_6)=& \left(
\frac{x_1}{\lambda_3}:\frac{x_0}{\lambda_3}:
\frac{y_1}{\lambda_2}:\frac{y_0}{\lambda_2}:
\frac{z_1}{\lambda_1}:\frac{z_0}{\lambda_1} \right), \nonumber\\
\left( \frac{x_1}{x_0}, \frac{y_1}{y_0},\frac{z_1}{z_0} \right) = &
\left( \frac{u_1}{u_2}, \frac{u_3}{u_4},\frac{u_5}{u_6} \right), \nonumber
\end{align}
The 1st, 3rd and 5th rows of (\ref{eq:r111a}) multiplied by the vector $(x_1,x_0,y_1,y_0,z_1,z_0)$ 
give the non-zero entries of the Jacobian matrix. The 2nd, 4th and 6th rows give the derivatives
with respect to $x_0,y_0,z_0$, This allows to complete three Euler identities like (\ref{eq:euler}),
and relate the kernel element with a singular root of (\ref{eq:p111}).
\end{proof}

We have the following observation.
\begin{lemma}
The system $(\ref{eq:p111})$ has a multiple root if and only if the quadratic form
$F+G+H$ (in the six variables $x_1,x_0,y_1,y_0,z_1,z_0$) degenerates.
\end{lemma}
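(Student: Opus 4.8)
The plan is to read the lemma off the preceding Theorem by recognizing the $6\times6$ matrix in (\ref{eq:r111a}) as (twice) the Gram matrix of the quadratic form $F+G+H$. Once this identification is in place, degeneracy of the form becomes singularity of that matrix, i.e.\ vanishing of its determinant, and the Theorem already equates that determinant with the discriminant.

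First I would make the identification explicit. Write $Q=H_1+H_2+H_3$ for the form $F+G+H$, and order the six variables as $(x_1,x_0,y_1,y_0,z_1,z_0)$ to match the columns of (\ref{eq:r111a}). Its associated symmetric bilinear form is $B(v,w)=\tfrac12\big(Q(v+w)-Q(v)-Q(w)\big)$. Since each $H_i$ is bilinear and contains only products of \emph{distinct} variables, $Q$ has no squared terms, so the Gram matrix has zero diagonal; the three off-diagonal $2\times2$ blocks coupling the $x$-, $y$- and $z$-pairs are filled by the coefficients $a_\bullet$, $b_\bullet$ and $c_\bullet$ respectively. A direct comparison shows that this Gram matrix equals $\tfrac12$ times the matrix in (\ref{eq:r111a}); the scalar is immaterial to what follows.

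Next I would invoke the standard fact that over a field of characteristic $\ne2$ a quadratic form is degenerate exactly when its Gram matrix is singular. Combined with the identification above, this yields that $Q$ is degenerate if and only if the matrix in (\ref{eq:r111a}) is singular. To close the loop there are two routes. The quick one uses the Theorem: the determinant of (\ref{eq:r111a}) equals $\Delta(H_1,H_2,H_3)$, which by the definition of the discriminant vanishes precisely on the multiple-root locus of (\ref{eq:p111}). The self-contained one avoids the discriminant altogether: the radical of $Q$ is exactly the kernel of the matrix in (\ref{eq:r111a}), and the correspondence established in the proof of the Theorem turns a nonzero kernel vector into a singular root of the system, and conversely.

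The point needing the most care is the genuine two-sidedness of the final equivalence — that degeneracy of $Q$ \emph{forces} a multiple root, not merely the converse. Invoking only the discriminant would rest this direction on irreducibility of $\Delta$; I would instead lean on the Theorem's explicit kernel-to-root correspondence, which supplies both implications at once and makes the ``if and only if'' transparent.
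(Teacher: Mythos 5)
Your proof is correct and is exactly the intended argument: the paper states this lemma as a bare ``observation'' with no proof, and the whole content is your identification of the matrix in (\ref{eq:r111a}) as twice the Gram matrix of $H_1+H_2+H_3$ (the paper's $F+G+H$), after which the lemma is a restatement of the preceding Theorem. Your added care in grounding the ``only if'' direction on the kernel-to-root correspondence from the Theorem's proof, rather than on the definition of the discriminant alone, is a legitimate and worthwhile refinement.
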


The $2\times2$ blocks of (\ref{eq:r111a}) represent the following derivatives:
\begin{equation} \label{eq:rr111}
\left( \begin{array}{ccc}
0 & \partial F_1/\partial x & \partial F_2/\partial x \\
\partial F_1/\partial y & 0 & \partial F_3/\partial y \\
\partial F_2/\partial z & \partial F_3/\partial z & 0
\end{array} \right).
\end{equation}
The determinants of the matrices (\ref{eq:rr111}) and
(\ref{eq:j111}) match formally.

Direct generalization to $\PP^k\times\PP^\ell\times\PP^m$ is hardly possible
if the equation blocks have the sizes $k,\ell,m$, 
because the derivative blocks have non-matching number of columns. 
But we might assume the equation blocks to be of equal size,
and then the matrix is constructed correctly. But would its determinant indeed
be the system discriminant? 


\newcommand{\etalchar}[1]{$^{#1}$}

\end{document}